\documentclass{amsart}
\usepackage{amsmath,amsfonts,amsthm,amssymb,indentfirst,epic,xurl,graphics,needspace}

\newtheorem{theorem}{Theorem}
\newtheorem{lemma}[theorem]{Lemma}
\newtheorem{corollary}[theorem]{Corollary}
\newtheorem{proposition}[theorem]{Proposition}
\newtheorem{definition}[theorem]{Definition}

\numberwithin{equation}{section}
\numberwithin{theorem}{section}

\renewcommand{\r}{\mathrm}

\begin{document}

\begin{center}
\texttt{Comments, corrections,
and related references welcomed, as always!}\\[.5em]
{\TeX}ed \today
\\[.5em]
\vspace{2em}
\end{center}

\title%
{Further thoughts on dimensions of posets}
\thanks{%
Readable at \url{http://math.berkeley.edu/~gbergman/papers/unpublished},
or, if I decide to submit it for
publication, at \url{http://math.berkeley.edu/~gbergman/papers/}.
}

\subjclass[2020]{Primary: 06A07.
%                         combinatorics_of_posets
% Secondary:
}

\author{George M.\ Bergman}
\address{Department of Mathematics\\
University of California\\
Berkeley, CA 94720-3840, USA}
\email{gbergman@math.berkeley.edu}

\begin{abstract}
We recall the concept of the dimension of a finite poset $P,$
and the longstanding conjecture that for all finite
nonempty posets $P$ and $Q,$
$\dim(P\times Q)\geq\dim(P)+\dim(Q)-2.$
We then note two other plausible
inequalities, either of which would imply that one.

In the final section, writing $P\preccurlyeq P'$
if, for all $Q,$ $\dim(P\times Q)\leq\dim(P'\times Q),$
and writing $P\approx P'$ if $P\preccurlyeq P'$ and $P'\preccurlyeq P,$
we note some results and questions concerning these relations.
\end{abstract}
\maketitle

\section{Background}\label{S.bkgrd}

This note is an addendum to~\cite{poset_dim}.
Let us recall the basic concepts considered there, and set some
conventions we will follow.

\begin{definition}\label{D.main}
In this note, a {\em poset} will mean a {\em finite} {\em nonempty}
partially ordered set.
% {\rm(}These assumptions will occasionally be mentioned
% for emphasis{\rm.)}

The {\em dimension} $\dim(P)$ of a poset $P$ means the least
nonnegative integer $d$ such that $P$ can be embedded in
a direct product of $d$ chains.

A poset $P$ will be called {\em connected} if $P$ cannot
be written as the union $P_0\cup P_1$ of two nonempty subsets
such that no element of $P_0$ is comparable with any
member of $P_1$ under the partial ordering of~$P.$

\end{definition}

In~\cite{poset_dim}, posets were allowed to be infinite, though
the results there concerned posets of finite {\em dimension.}
As noted in~\cite[\S8]{poset_dim}, the dimension of a finite-dimensional
infinite poset is the supremum of the dimensions of
its finite subposets.
Given that fact, it will suffice for the purposes of this note to
restrict attention to finite posets, as specified
in the above Definition.

\section{Products of posets with added tops and bottoms}\label{S.P+}

A longstanding question in the theory of poset dimensions is
\begin{equation}\begin{minipage}[c]{35pc}\label{d.leq2?}
\cite[Question 3.2(i)]{poset_dim}\ \ %
If $P$ and $Q$ are posets, must
$\dim(P\times Q)\,\geq\,\dim(P)+\dim(Q)-2$?
\end{minipage}\end{equation}

Two related questions are
\begin{equation}\begin{minipage}[c]{35pc}\label{d.^+^+?}
\cite[Question 3.3(ii)]{poset_dim}\ \ %
If $P$ and $Q$ are posets each of which has a least element,
or each of which has a greatest element,
must $\dim(P\times Q)\,\geq\,\dim(P)+\dim(Q)-1$?
\end{minipage}\end{equation}
\begin{equation}\begin{minipage}[c]{35pc}\label{d._+^+?}
\cite[Question 3.3(i)]{poset_dim}\ \ %
If $P$ and $Q$ are posets
one of which has a least element and the other a greatest element,
must $\dim(P\times Q)\,\geq\,\dim(P)+\dim(Q)-1$?
\end{minipage}\end{equation}

We will note below a possible approach to these three questions.

First, some notation:

\begin{definition}\label{D.P+}
If $P$ is a poset, $P^+$ will denote the poset obtained
by adjoining to $P$ a single element greater than
all elements of $P,$ and $P_+$ the poset obtained
by adjoining to $P$ a single element less than all elements of $P.$
These added elements will be denoted $1$ and $0$ respectively,
unless existing elements of $P$ have already been so named.

The posets $(P^+)_+$ and $(P_+)^+$ will be identified with
each other, and the resulting poset denoted $P{_+^+}.$

Since the construction taking a poset $P$ to its opposite
{\rm(}the poset with the same underlying set but the reverse
ordering{\rm)} respects dimension and products, but interchanges
the operations  $(\ )^+$ and  $(\ )_+,$ we shall take for granted
that general results, questions, etc.\ stated for one of the latter two
operations are equivalent to the corresponding statements for the other.
\end{definition}

We note that
\begin{equation}\begin{minipage}[c]{35pc}\label{d.dim=}
If $P$ is a poset of positive dimension (equivalently, a poset
with more than one element), then
$\dim(P)=\dim(P^+)=\dim(P_+)=\dim(P{_+^+}).$
\end{minipage}\end{equation}
Clearly, it suffices to verify the first equality (cf.~last
paragraph of the above Definition).
This is the case of~\cite[(1.11)]{poset_dim} with the $\!2\!$-element
chain ${\bf 2}=\{0,1\}$ in the role of the indexing poset $P$
of that result, the poset here called $P$ in the role of $Q_0$
of that result, and the singleton poset $\{1\}$ in the role of $Q_1.$

(In fact, the brief proof of~\cite[(1.11)]{poset_dim}
comes down, in this case,
to taking a representation of the $P$ of~\eqref{d.dim=}
as a subposet of a product
of $\dim(P)$ chains, adjoining a new top element to
each of those chains, and representing $P^+$ as the subposet of the
product of these extended chains consisting of the
elements of $P,$ and the element having for all its coordinates
these new top elements.)

We recall
\begin{equation}\begin{minipage}[c]{35pc}\label{d.KB}
\cite[p.\,9, Property~3]{KB},
\cite[Theorem 3.1]{poset_dim},
\cite[p.\,179, last 11 lines]{K+T}.
If $P$ and $Q$ are posets each of which has {\em both} a least and a
greatest element, then $\dim(P\times Q)=\dim(P) + \dim(Q).$
\end{minipage}\end{equation}

Though as noted in~\eqref{d.dim=},
adding a top or bottom element to a poset of positive dimension
leaves that dimension unchanged, things can change when one
takes direct products of such enlarged posets.

By how much?
I know of no counterexamples to the bounds suggested in
the following questions.
\begin{equation}\begin{minipage}[c]{35pc}\label{d.P^+xQ^+?}
If $P$ and $Q$ are posets of cardinality~$>1,$
must $\dim(P^+\times Q^+)\,\leq\,\dim(P\times Q) + 1$?
\end{minipage}\end{equation}
\begin{equation}\begin{minipage}[c]{35pc}\label{d.P_+xQ^+?}
If $P$ and $Q$ are posets of cardinality~$>1,$
must $\dim(P_+\times Q^+)\,\leq\,\dim(P\times Q) + 1$?
\end{minipage}\end{equation}

These two questions are, in fact, related to those recalled
at the beginning of this section:

\begin{proposition}\label{P.=>}
{\rm(i)} A positive answer to
either~\eqref{d.P^+xQ^+?} or~\eqref{d.P_+xQ^+?}
would imply a positive answer to~\eqref{d.leq2?}.

{\rm(ii)} \,A positive answer to~\eqref{d.P^+xQ^+?}
would also imply a positive answer to~\eqref{d.^+^+?}.

{\rm(iii)} A positive answer to~\eqref{d.P_+xQ^+?}
would also imply a positive answer to~\eqref{d._+^+?}.
\end{proposition}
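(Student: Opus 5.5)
The plan is to combine the hypothesized upper bounds with the equality~\eqref{d.KB} for posets having both a top and a bottom, together with~\eqref{d.dim=}. First we dispose of trivial cases. If $P$ (say) has only one element, then $\dim(P)=0$ and $P\times Q\cong Q$. Hence $\dim(P\times Q)=\dim(Q)$, which exceeds both $\dim(P)+\dim(Q)-2$ and $\dim(P)+\dim(Q)-1$. So in all three parts we may assume $P$ and $Q$ have cardinality $>1$, which is exactly the hypothesis needed to apply~\eqref{d.P^+xQ^+?},~\eqref{d.P_+xQ^+?} and~\eqref{d.dim=}. We use freely that $P\times Q\cong Q\times P$. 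By the last paragraph of Definition~\ref{D.P+}, a positive answer to~\eqref{d.P^+xQ^+?} is equivalent to the dual statement $\dim(P_+\times Q_+)\leq\dim(P\times Q)+1$, obtained by passing to opposite posets.

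For (ii), suppose first that $P$ and $Q$ both have least elements. Then $P^+$ and $Q^+$ have both least and greatest elements. By~\eqref{d.KB} and~\eqref{d.dim=},
$\dim(P^+\times Q^+)=\dim(P^+)+\dim(Q^+)=\dim(P)+\dim(Q)$.
Combining this with the assumed inequality $\dim(P^+\times Q^+)\leq\dim(P\times Q)+1$ gives $\dim(P\times Q)\geq\dim(P)+\dim(Q)-1$. If instead both have greatest elements, we run the same argument with the dual form $\dim(P_+\times Q_+)\leq\dim(P\times Q)+1$. For (iii), suppose $P$ has a least element and $Q$ a greatest element. We apply~\eqref{d.P_+xQ^+?} to the pair $(Q,P)$, which gives $\dim(Q_+\times P^+)\leq\dim(Q\times P)+1$. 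Now $Q_+$ and $P^+$ each have both a top and a bottom, so~\eqref{d.KB} and~\eqref{d.dim=} give $\dim(Q_+\times P^+)=\dim(P)+\dim(Q)$, and the conclusion follows as before.

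For (i), the idea is to apply the hypothesized inequality twice, so that both factors acquire tops and bottoms. Assuming~\eqref{d.P^+xQ^+?}, we get $\dim(P^+\times Q^+)\leq\dim(P\times Q)+1$. The posets $P^+$ and $Q^+$ again have more than one element, so the dual form applied to them gives $\dim(P{_+^+}\times Q{_+^+})\leq\dim(P^+\times Q^+)+1$. By~\eqref{d.KB} and~\eqref{d.dim=} the left side is $\dim(P)+\dim(Q)$, so $\dim(P\times Q)\geq\dim(P)+\dim(Q)-2$. (Alternatively, one step takes us to posets with least elements and we then invoke the argument of (ii).) Assuming~\eqref{d.P_+xQ^+?} instead, we first get $\dim(P_+\times Q^+)\leq\dim(P\times Q)+1$. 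Then we apply~\eqref{d.P_+xQ^+?} to the pair $(Q^+,P_+)$, getting $\dim((Q^+)_+\times(P_+)^+)\leq\dim(Q^+\times P_+)+1$, and conclude in the same way.

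There is no real obstacle. The only care needed is bookkeeping: tracking which of $(\ )^+$, $(\ )_+$ is applied to which factor, so that each factor ends up with both a top and a bottom; justifying the duality step; and checking at each stage that the cardinality~$>1$ hypotheses required by~\eqref{d.P^+xQ^+?},~\eqref{d.P_+xQ^+?} and~\eqref{d.dim=} hold. They do, since adjoining elements only increases cardinality.
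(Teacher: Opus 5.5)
Your proof is correct and is essentially the paper's argument: \eqref{d.dim=} and \eqref{d.KB} give $\dim(P)+\dim(Q)$ for the product of the enlarged posets, and you combine this with one application of the hypothesized bound (or its dual via opposite posets) for (ii) and (iii), and two applications for (i). The differences are only cosmetic. In (i) you adjoin the tops first and then the bottoms, while the paper applies the hypothesis to $P_+,Q_+$ first; and you prove (ii) and (iii) directly rather than as shortened versions of the chain of inequalities used for (i).
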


\begin{proof}
If $P$ and/or $Q$ is a singleton (equivalently, has dimension $0)$
then \eqref{d.leq2?}, \eqref{d.^+^+?}, and~\eqref{d._+^+?}
trivially have affirmative answers, so let us assume we are given
$P$ and $Q$ both of cardinality $>1.$

Assuming first a positive answer to~\eqref{d.P^+xQ^+?}, I claim that
for all $P$ and $Q$ of cardinality $>1,$
\begin{equation}\begin{minipage}[c]{35pc}\label{d.P^+xQ^+=>}
$\dim(P)+\dim(Q) = \dim(P{_+^+})+\dim(Q{_+^+})
= \dim(P{_+^+}\times Q{_+^+})\,\leq\,
\dim(P_+\times Q_+) + 1\,\leq\,
\dim(P\times Q) + 2.$
\end{minipage}\end{equation}

Indeed, the first equality is seen by applying~\eqref{d.dim=}
to $P$ and $Q,$ the second follows
from~\eqref{d.KB}, the following inequality from the
assumed positive answer to~\eqref{d.P^+xQ^+?}, and the final
inequality by applying that same assumption to the opposites of
the posets in question.

Subtracting $2$ from both ends of~\eqref{d.P^+xQ^+=>}, we get
the inequality of~\eqref{d.leq2?}, establishing the conclusion of~(i)
under our assumption of a positive answer~\eqref{d.P^+xQ^+?}.

If $P$ and $Q$ each have a least element, we can
modify~\eqref{d.P^+xQ^+=>} by dropping all the
{\em subscripted} ``$\!+\!$"'s
and skipping the last step, yielding the conclusion of~(ii).

If, rather, we assume a positive answer to~\eqref{d.P_+xQ^+?},
we get a consequence like~\eqref{d.P^+xQ^+=>}, modified only by
changing the term $\dim(P_+\times Q_+)$ to $\dim(P^+\times Q_+).$
(The two final inequalities are
each gotten by applying~\eqref{d.P_+xQ^+?},
with the order of factors reversed in the last case.)
This again gives a positive answer to~\eqref{d.leq2?},
completing the proof of~(i),
while if $P$ has a least element and $Q$ a
greatest element, we can, as before, shorten our
version of~\eqref{d.P^+xQ^+=>}, and get~(iii).
\end{proof}

\section{Some observations on the above result}\label{S.re_P+}

I have repeatedly thought, ``Isn't the result asked for
in~\eqref{d.P^+xQ^+?} implied by that asked for in~\eqref{d.^+^+?},
or at least by that together with~\eqref{d.leq2?}, so that
question~\eqref{d.P^+xQ^+?} is just a reformulation
of the existing questions~\eqref{d.leq2?} and~\eqref{d.^+^+?}?''
But I do not see any way of getting the reverse implication
(nor the corresponding implication with~\eqref{d.P_+xQ^+?}
and~\eqref{d._+^+?} in place of~\eqref{d.P^+xQ^+?} and~\eqref{d.^+^+?}).
So it seems that the results asked for in~\eqref{d.P_+xQ^+?}
and~\eqref{d.P^+xQ^+?} are actually stronger than those asked for
in~\eqref{d.leq2?}-\eqref{d._+^+?}.

This does not mean that going from~\eqref{d.leq2?}-\eqref{d._+^+?}
to~\eqref{d.P^+xQ^+?} and~\eqref{d.P_+xQ^+?}
would necessarily make things harder.
At times, aiming for a stronger statement can lead one to
approaches that the question one started with would not have suggested.
On the down side, however,
questions~\eqref{d.P^+xQ^+?} and~\eqref{d.P_+xQ^+?}
are not as conceptually simple as~\eqref{d.leq2?}-\eqref{d._+^+?}.

One might think that alongside the inequalities
of~\eqref{d.P^+xQ^+?} and~\eqref{d.P_+xQ^+?} one could hope
for an inequality $\dim(P\times Q{_+^+})\leq \dim(P\times Q) + 1;$
but this does not hold.
For instance, if $P={\bf 2}\times{\bf 2},$ and $Q$ is a ``standard
example'' $S_n$ $(n>2)$~\cite[(1.6)]{poset_dim},
then $\dim(P\times Q) = \dim(P) = n$
\cite[(1.6)]{poset_dim}; but in view of~\eqref{d.KB},
$\dim(P\times Q{_+^+})=\dim(P)+\dim(Q{_+^+})
= \dim(P) + \dim(Q) = n+2 \not\leq n+1.$
% (cf.~\cite[paragraph following Question~3.2]{poset_dim}).

One might try to approach~\eqref{d.P^+xQ^+?} by regarding
$P^+\times Q^+$ as the union of the product-poset
$P\times Q$ with a poset adjoined ``above it'', namely
\begin{equation}\begin{minipage}[c]{35pc}\label{d.above}
$(P\times\{1\})\,\cup\,(\{1\}\times Q)\,\cup\,\{(1,1)\}.$
\end{minipage}\end{equation}
But that set lies ``above'' $P\times Q$ only in the weak sense
that none of its elements are $\leq$ any elements of $P\times Q.$
In general, some elements of~\eqref{d.above} are incomparable
with some elements of $P\times Q;$
and the fact~\cite[(1.11)]{poset_dim}
used in the proof of~\eqref{d.dim=} above, which applies when
a poset breaks into two parts, one of which is above
the other in the strong sense that {\em all} of its elements lie
above {\em all} elements of the other, is not true for this
weaker condition.
(For instance, for each $n\geq 3$ the standard example $S_n$
is the union of two $\!n\!$-element
antichains one of which ``lies above'' the other in the
weak sense; and each of those antichains has dimension $2,$ but
$S_n$ has dimension $n,$ which for large $n$ is far
greater than $\max(2,2).)$

We remark that if for some $n>1$ we could prove a weakened
version of the result asked for in~\eqref{d.P^+xQ^+?}
or~\eqref{d.P_+xQ^+?} with ``$\!+1\!$'' replaced by ``$\!+n\!$'',
then the method of proof of Proposition~\ref{P.=>}
would yield the corresponding conclusions with ``$\!+1\!$''
replaced by ``$\!+n\!$'', and ``$\!+2\!$'' by ``$\!+2n\!$''.
Likewise, if we ask
\begin{equation}\begin{minipage}[c]{35pc}\label{d.PxQ^+?}
Is there a positive integer $m$ such that, whenever
$P$ and $Q$ are posets of cardinality~$>1,$
$\dim(P\times Q^+)\,\leq\,\dim(P\times Q) + m$?
\end{minipage}\end{equation}
then it is easy to see that if there is such an $m,$ then
versions of~\eqref{d.P^+xQ^+?}
and~\eqref{d.P_+xQ^+?} with ``$\!+1\!$'' replaced by ``$\!+2m\!$''
will have positive answers, hence
the version of question~\eqref{d.leq2?}
with $2$ replaced by $4m$ would have a positive answer.
Even for $m=1$ this would be a considerable weakening of the
hoped-for inequality of~\eqref{d.leq2?}; but it would still
be a big improvement on our present lack of knowledge.
And the $m=1$ case of~\eqref{d.PxQ^+?} might be
easier to study than~\eqref{d.P^+xQ^+?} or~\eqref{d.P_+xQ^+?},
because of the simpler form of the poset it adjoins to $P\times Q.$

We also remark that it might be desirable to restrict
questions~\eqref{d.P^+xQ^+?} and~\eqref{d.P_+xQ^+?} to the
case where the posets $P$ and $Q$
are {\em connected} (Definition~\ref{D.main}).
As noted in~\cite[Question~4.4]{poset_dim} and the paragraphs
immediately preceding and following it,
and in~\S\ref{S.equiv} below, non-connected posets can
show some behaviors not known to occur in connected posets.
If those questions
could be answered affirmatively for connected posets,
positive answers to~\eqref{d.leq2?} etc.\ for connected
posets would follow, and from these, the corresponding results
for arbitrary posets could easily be deduced.

\section{Classifying posets by dimension behavior}\label{S.equiv}

In \cite[Definition~4.1]{poset_dim} the {\em absorbency} $\r{abs}(P)$
of a poset $P$ is defined to be the largest natural number
$n$ such that $\dim(P\times\prod_{i\in n} T_i) = \dim(P)$ for every
$\!n\!$-tuple of chains $(T_i)_{i\in n}.$
(As is implicit in question~\eqref{d.leq2?} above, the absorbency
is $0,$ $1$ or $2$ in all cases where we know its value.)
Given a poset $P,$ the data about $P$ that we presently
know how to use in estimating the
dimensions of products $P\times Q$
are its dimension and its absorbency.
Can we associate more such information to a poset $P$?

Here is some notation and language we shall use in examining
this question.

\begin{definition}\label{d.dim-beh}
Given posets $P$ and $P',$ we shall write $P\preccurlyeq P'$ if for
all posets $Q,$ one has $\dim(P\times Q)\leq\dim(P'\times Q).$
If $P\preccurlyeq P'$ and $P'\preccurlyeq P,$ we shall write
$P\approx P'.$

If a poset $P$ is partitioned into disjoint subposets
$P_0,\dots,P_{n-1}$ such that for $i\neq j,$ no element of
$P_i$ is comparable with any element of $P_j,$ we shall
write $P=P_0\,\sqcup\,\dots\,\sqcup\,P_{n-1}.$
Likewise, given arbitrary posets $P_0,\dots,P_{n-1},$ we shall
write $P_0\,\sqcup\,\dots\,\sqcup\,P_{n-1}$ for the poset whose
underlying set is the union of the underlying sets
of $\{0\}\times P_0,\dots,\{n\!-\!1\}\times P_{n-1},$ and
where elements $(i,p)\in\{i\}\times P_i$ and
$(j,p')\in\{j\}\times P_j$ satisfy
$(i,p)\leq (j,p')$ if and only if $i=j$ and $p\leq p'$ in $P_i.$

As noted in Definition~\ref{D.main},
a poset $P$ will be called {\em connected} if it has no
expression $P = P'\,\sqcup\,P''$ for subposets $P'$ and $P''$
{\rm(}understood, as always, to be nonempty{\rm)}.
% $P$ will be called {\em non-connected} otherwise.
\end{definition}

So, for instance, \cite[Question~3.4]{poset_dim} asks whether
all chains of more than one element are  $\!\approx\!$-equivalent.

In fact, I do not know the answer to either of the following
two questions, which point to opposite possible extremes:
\begin{equation}\begin{minipage}[c]{35pc}\label{d.dim+ab?}
Are every pair of posets having equal dimension
and equal absorbency $\!\approx\!$-equivalent?
\end{minipage}\end{equation}
\begin{equation}\begin{minipage}[c]{35pc}\label{d.anyiso?}
Are $\!\approx\!$-equivalent connected posets always isomorphic?
\end{minipage}\end{equation}

For the relation $\preccurlyeq,$ one at least has a wider
class of examples known to satisfy $P\preccurlyeq Q$: Clearly,
any subposet $P$ of a poset $Q$ satisfies that relation.

For non-connected posets, one can give much larger classes of
examples of these relations.
We first note

\begin{lemma}\label{L.ncnctd_prod}
Given a poset $P=P_0\,\sqcup\,\dots\,\sqcup\,P_{n-1}$
where $n\geq 2,$ and any poset $Q,$ we have
\begin{equation}\begin{minipage}[c]{35pc}\label{d.noncnctd_prod}
$\dim(P\times Q)\,=\,
\max(2,\,\dim(P_0\times Q),\,\dots,\,\dim(P_{n-1}\times Q)).$
\end{minipage}\end{equation}
\end{lemma}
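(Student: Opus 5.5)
The plan is to prove the two inequalities separately. The key observation is that $P\times Q$ is itself a non-connected poset:
$$P\times Q=(P_0\times Q)\,\sqcup\,\dots\,\sqcup\,(P_{n-1}\times Q).$$
Indeed, if $(p,q)\leq(p',q')$, then $p\leq p'$ in $P$, which forces $p$ and $p'$ to lie in the same $P_i$. So it will suffice to prove that for any posets $R_0,\dots,R_{n-1}$ with $n\geq 2$,
$$\dim(R_0\sqcup\dots\sqcup R_{n-1})=\max(2,\dim(R_0),\dots,\dim(R_{n-1})),$$
and then apply this with $R_i=P_i\times Q$.

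\emph{Lower bound.} Each $R_i$ is a subposet of the union, and dimension is monotone under passing to subposets: restrict an embedding into a product of chains. So the union has dimension at least each $\dim(R_i)$. Moreover, choosing $x\in R_0$ and $y\in R_1$ gives two incomparable elements. A poset of dimension $\leq 1$ embeds in a single chain, so it is totally ordered; hence the union has dimension $\geq 2$.

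\emph{Upper bound.} Let $d=\max(2,\dim(R_0),\dots,\dim(R_{n-1}))$. For each $i$, embed $R_i$ in a product $C_i^1\times\dots\times C_i^d$ of $d$ chains, padding with one-element chains if $\dim(R_i)<d$. Then build $d$ chains as ordinal sums of these:
\begin{itemize}
\item $C^1$ stacks $C_0^1$ below $C_1^1$ below $\dots$ below $C_{n-1}^1$;
\item $C^2$ stacks the $C_i^2$ in the \emph{reverse} order, $C_{n-1}^2$ at the bottom up to $C_0^2$ at the top;
\item for $k\geq 3$, $C^k$ stacks the $C_i^k$ in the original order.
\end{itemize}
Each $x\in R_i$ is sent to the element of $C^1\times\dots\times C^d$ whose coordinates are its coordinates in the copies of the $C_i^k$.

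The verification then runs as follows. For $x,y$ in the same $R_i$, the image order agrees with the product order of $C_i^1\times\dots\times C_i^d$, so order is both preserved and reflected, and the map is injective on $R_i$. For $x\in R_i$ and $y\in R_j$ with $i<j$, the images satisfy $x<y$ in the first coordinate and $x>y$ in the second. So the images are incomparable (in particular distinct), exactly as $x$ and $y$ are in the disjoint union. Here the assumption $d\geq 2$ is essential, since it guarantees a second coordinate to reverse the block order.

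I do not expect a genuine obstacle. The only points needing care are the padding by trivial chains and the use of $d\geq 2$ to separate different blocks, which is precisely where the ``$2$'' in \eqref{d.noncnctd_prod} comes from.
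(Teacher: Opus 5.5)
Your proof is correct and follows the paper's route: both of you write $P\times Q=(P_0\times Q)\sqcup\dots\sqcup(P_{n-1}\times Q)$ and then apply the dimension formula for such a disjoint union. The paper obtains that formula by citing \cite[(1.11)]{poset_dim}, the result on lexicographic sums, with the $2$-dimensional antichain $\{0,\dots,n-1\}$ as index poset; you instead prove this special case directly, and your ordinal-sum construction with the block order reversed in the second coordinate is exactly what that result's proof amounts to here.
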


\begin{proof}
Note that
$P\times Q = (P_0\times Q)\,\sqcup\,\dots\,\sqcup\,(P_{n-1}\times Q),$
and apply~\cite[(1.11)]{poset_dim} with the antichain
$\{0,\dots,n-1\},$ which has dimension~$2,$
in the role of the indexing poset $P$ of that display, and
the products $P_i\times Q$ $(i=0,\dots,n{-}1)$
in the roles of the posets there denoted $Q_p.$
\end{proof}

We can now deduce the following.
(In the statement below, we could drop any or all of the
restrictions ``$\!\preccurlyeq\!$-maximal'', giving a formally
simpler statement.
But including those conditions reduces the set of cases that
must be checked in determining whether a pair of posets
is $\!\approx\!$-equivalent, and gives a clearer picture
of the $\!\approx\!$-equivalent families described.)

\begin{proposition}\label{P.max_Pi}
Suppose $P=P_0\,\sqcup\,\dots\,\sqcup\,P_{n-1}$ and
$P'=P'_0\,\sqcup\,\dots\,\sqcup\,P'_{n'-1}$ are posets, each having
dimension $\geq~2.$

Then a sufficient condition for $P\approx P'$ to hold
is that every $\!\preccurlyeq\!$-maximal member $P_i$ of
the family $P_0,\dots,P_{n-1}$ is $\!\approx\!$-equivalent
to a $\!\preccurlyeq\!$-maximal member $P'_{i'}$ of
the family $P'_0,\dots,P'_{n'-1},$ and
vice versa.
\end{proposition}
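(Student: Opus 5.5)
The plan is to reduce everything to Lemma~\ref{L.ncnctd_prod}. Fix an arbitrary poset $Q$. The goal is to show that $\dim(P\times Q)$ and $\dim(P'\times Q)$ are computed by the same expression of the form \eqref{d.noncnctd_prod}, once that expression is restricted to the $\preccurlyeq$-maximal components.

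\emph{Step 1: extend \eqref{d.noncnctd_prod} to $n=1$.} The statement does not exclude $n=1$, so I first check that, under the hypothesis $\dim(P)\geq 2$, the formula
\[
\dim(P\times Q)=\max(2,\,\dim(P_0\times Q),\dots,\dim(P_{n-1}\times Q))
\]
holds for every $n\geq 1$. For $n\geq 2$ this is exactly Lemma~\ref{L.ncnctd_prod}. For $n=1$ we have $P=P_0$. Since $P\times Q$ contains a copy of $P$ as a subposet, $\dim(P\times Q)\geq\dim(P)\geq 2$, so the outer $\max$ with $2$ changes nothing. The same argument applies to $P'$.

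\emph{Step 2: only the maximal components matter.} The relation $\preccurlyeq$ is reflexive and transitive, so it is a preorder on the finite family $P_0,\dots,P_{n-1}$. In a finite preordered family, every member lies $\preccurlyeq$-below some $\preccurlyeq$-maximal member, where maximal means that nothing in the family lies strictly above it. To see this, start at $P_j$ and repeatedly pass to a strictly larger member. Each step goes strictly up, and "strictly larger" is acyclic in a preorder, so finiteness forces the process to stop at a maximal member. If $P_j\preccurlyeq P_i$, then $\dim(P_j\times Q)\leq\dim(P_i\times Q)$ by definition. Hence the maximum in Step~1 may be taken over the $\preccurlyeq$-maximal $P_i$ only, and the same holds for $P'$.

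\emph{Step 3: compare the two families.} By hypothesis, each maximal $P_i$ is $\approx$-equivalent to some maximal $P'_{i'}$. This gives
\[
\dim(P_i\times Q)=\dim(P'_{i'}\times Q)\leq\dim(P'\times Q).
\]
Combining this with Steps~1 and~2 yields $\dim(P\times Q)\leq\dim(P'\times Q)$. The "vice versa" half of the hypothesis gives the reverse inequality in the same way. Since $Q$ was arbitrary, we conclude $P\approx P'$.

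The only point requiring care is Step~1. That is where the hypothesis $\dim\geq 2$ is used, both to cover $n=1$ and to justify the $\max(2,\dots)$ term on each side. The existence of maximal elements in Step~2 is routine, given finiteness.
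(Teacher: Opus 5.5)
Your proof is correct and matches the paper's argument: extend the formula of Lemma~\ref{L.ncnctd_prod} to $n=1$ using $\dim\geq 2$, note that the maximum is attained at a $\preccurlyeq$-maximal component, transfer it across via $\approx$, and conclude by symmetry. Your Step~2 simply spells out the existence of maximal members in a finite preorder, which the paper takes for granted.
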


\begin{proof}
Let us first note that for $P=P_0\,\sqcup\,\dots\,\sqcup\,P_{n-1}$
having dimension $\geq 2$ as
assumed here, and any $Q,$~\eqref{d.noncnctd_prod} holds
without the requirement of Lemma~\ref{L.ncnctd_prod} that $n\geq 2.$
Indeed, if $n=1,$ $\dim(P\times Q)= \dim(P_0\times Q),$
and our hypothesis that $P$ has dimension $\geq 2$ implies that
$\dim(P\times Q)\geq 2,$ so we indeed have
$\dim(P\times Q)=\max(2,\,\dim(P_0\times Q)),$
i.e.,~\eqref{d.noncnctd_prod}.

I claim that the resulting formula yields the same values for
$\dim(P\times Q)$ and $\dim(P'\times Q).$

To show that the former is $\leq$ the latter,
it will suffice to show that the largest of the
values $\dim(P_i\times Q)$ is also one of the
values $\dim(P'_{i'}\times Q).$
The former largest value will be achieved at some
$P_i$ which is $\!\preccurlyeq\!$-maximal among the $P_j,$
and by hypothesis, there is an $\!\approx\!$-equivalent $P'_{i'}$
among the $P'_{j'}.$
By this $\!\approx\!$-equivalence we have
$\dim(P'_{i'}\times Q)=\dim(P_i\times Q),$ hence
applying~\eqref{d.noncnctd_prod}
we indeed have $\dim(P\times Q)\leq\dim(P'\times Q).$

By symmetry we also have $\dim(P'\times Q)\leq\dim(P\times Q),$
completing the proof.
\end{proof}

So, for example, starting with any
$P=P_0\,\sqcup\,\dots\,\sqcup\,P_{n-1}$
of dimension $\geq 2,$ we can get infinitely many
isomorphism classes of posets $P'\approx P$
by throwing in multiple copies of subposets of the $P_i.$

We remark that the condition that $P$ and $P'$ each have
dimension~$\geq 2$ in the hypothesis of Proposition~\ref{P.max_Pi}
cannot be dropped.
For instance, suppose that $P={\bf 2}$ and $P'={\bf 2}\sqcup{\bf 2}.$
The $\!\preccurlyeq\!$-maximal pieces in the above descriptions
of $P$ and $P'$ are certainly $\!\approx\!$-equivalent,
but $P$ fails to have dimension $\geq 2;$ so
letting $Q$ be a $\!1\!$-element poset, we see that
$\dim(P\times Q)=\dim(P)=1,$ while $\dim(P'\times Q)=\dim(P')=2.$

I don't know whether, for every pair of
posets $P$ and $P'$ and pair of decompositions of
these posets as in Proposition~\ref{P.max_Pi}, with the
added condition that all $P_i$ and $P'_{i'}$ be connected,
the sufficient condition of that Proposition
for $P$ and $P'$ to be $\!\approx\!$-equivalent is also necessary.
A couple of cases of this question are:
\begin{equation}\begin{minipage}[c]{35pc}\label{q.P0xP1=Q0xQ1?}
Suppose $P_0,$ $P_1,$ $P'_0,$ $P'_1$ are connected posets
satisfying $P_0\,\sqcup\,P_1\approx P'_0\,\sqcup\,P'_1,$
and such that $P_0$ is $\!\preccurlyeq\!$-incomparable with $P_1,$
and $P'_0$ is $\!\preccurlyeq\!$-incomparable with $P'_1.$
Must we have either $P_0\approx Q_0$ and $P_1\approx Q_1,$ or
$P_0\approx Q_1$ and $P_1\approx Q_0$?
\end{minipage}\end{equation}
\begin{equation}\begin{minipage}[c]{35pc}\label{d.2=1}
Can one have a relation $P\approx P'_0\,\sqcup\,P'_1$ where
$P$ is a connected poset, and neither $P'_0$ nor $P'_1$
is $\approx P$?
\end{minipage}\end{equation}

Returning to Proposition~\ref{P.max_Pi}, the reader can easily verify
the corresponding criterion for $P\preccurlyeq P'$ to hold:

\begin{corollary}[to
proof of Proposition~\ref{P.max_Pi}]\label{C.max_Pi}
Suppose $P=P_0\,\sqcup\,\dots\,\sqcup\,P_{n-1}$ and
$P'=P'_0\,\sqcup\,\dots\,\sqcup\,P'_{n'-1}$ are posets,
with $\dim(P')\geq 2.$

Then a sufficient condition for $P\preccurlyeq P'$ to hold
is that for every $\!\preccurlyeq\!$-maximal member $P_i$ of
the family $P_0,\dots,P_{n-1},$ one has $P_i\preccurlyeq P'_{i'}$
for some $\!\preccurlyeq\!$-maximal member $P'_{i'}$ of
the family $P'_0,\dots,P'_{n'-1}.$\qed
\end{corollary}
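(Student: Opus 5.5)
The plan is to repeat the argument in the proof of Proposition~\ref{P.max_Pi}, keeping only the direction $\dim(P\times Q)\leq\dim(P'\times Q)$. For this direction we only need an \emph{upper} bound on $\dim(P\times Q)$. That is why the hypothesis $\dim\geq 2$ is needed only for $P'$.

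Fix an arbitrary poset $Q$. The first step is to show that
\[
\dim(P\times Q)\,\leq\,\max(2,\,\dim(P_0\times Q),\,\dots,\,\dim(P_{n-1}\times Q)).
\]
If $n\geq 2$ this is the equality of Lemma~\ref{L.ncnctd_prod}. If $n=1$ then $P=P_0$, and the left side equals $\dim(P_0\times Q)$, which is trivially at most the right side.

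For $P'$ we want the reverse inequality, $\dim(P'\times Q)\geq\max(2,\,\dim(P'_0\times Q),\,\dots)$. Each $P'_{j'}\times Q$ is a subposet of $P'\times Q$, so $\dim(P'_{j'}\times Q)\leq\dim(P'\times Q)$. Also $Q$ is nonempty, so $P'$ embeds in $P'\times Q$, giving $\dim(P'\times Q)\geq\dim(P')\geq 2$. (This is the argument of the first paragraph of the proof of Proposition~\ref{P.max_Pi}.)

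Next we show that the largest value $\dim(P_i\times Q)$ is attained at a $\preccurlyeq$-maximal member of the family. Choose any index $i$ at which the maximum is attained. Since $\preccurlyeq$ is a preorder on the finite family $P_0,\dots,P_{n-1}$, there is some $\preccurlyeq$-maximal $P_j$ with $P_i\preccurlyeq P_j$. To see this, follow strict increases in the preorder; by finiteness this process must stop. Then $\dim(P_j\times Q)\geq\dim(P_i\times Q)$, so $P_j$ also attains the maximum.

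By hypothesis there is a $\preccurlyeq$-maximal $P'_{j'}$ with $P_j\preccurlyeq P'_{j'}$. Hence
\[
\dim(P_j\times Q)\leq\dim(P'_{j'}\times Q)\leq\dim(P'\times Q).
\]
Combining this with $2\leq\dim(P'\times Q)$ and the first step gives $\dim(P\times Q)\leq\dim(P'\times Q)$. Since $Q$ was arbitrary, $P\preccurlyeq P'$.

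The only point needing care is the existence of a $\preccurlyeq$-maximal member above the index where the maximum is attained. As explained above, this follows from finiteness of the family and transitivity of $\preccurlyeq$.
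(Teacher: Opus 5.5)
Your proof is correct and is essentially the argument the paper intends: it adapts the proof of Proposition~\ref{P.max_Pi} using only the upper bound $\dim(P\times Q)\leq\max(2,\dim(P_0\times Q),\dots,\dim(P_{n-1}\times Q))$ for $P$ (trivial when $n=1$, so no hypothesis on $\dim(P)$ is needed) and the lower bound $\dim(P'\times Q)\geq\max(2,\dim(P'_{j'}\times Q))$ for $P'$, which is where $\dim(P')\geq 2$ enters. Your remark that the maximum over the $P_i$ is attained at a $\preccurlyeq$-maximal member, by finiteness and transitivity, just makes explicit a step the paper takes for granted.
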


% \section{Acknowledgements}\label{S.ackn}


\begin{thebibliography}{00}

\bibitem{KB} Kirby A. Baker,
{\em Dimension, join-independence, and breadth in partially ordered
sets}, Honors Thesis, Harvard University, 1961 (unpublished), 28\,pp.
\url{https://www.math.ucla.edu/~baker/res/po/undergraduate_thesis.pdf}

\bibitem{poset_dim} George M. Bergman,
{\em Some frustrating questions on dimensions of products of posets},
Discrete Mathematics, {\bf 349} Issue 6, June 2026, article 115002.
arXiv:2312.12615.  MR5019679.

\bibitem{K+T} David Kelly and William T. Trotter, Jr.,
{\em Dimension theory for ordered sets}, pp.\,171-211 in
{\em Ordered sets \textup{(}Banff, Alta, 1981\textup{)}},
NATO Adv. Study Inst. Ser. C: Math. Phys. Sci.,
{\bf 83} (1982).
MR0661294

\end{thebibliography}
\end{document}